\newcommand\C{{\mathbb C}}
\newcommand\Q{{\mathbb Q}}
\newcommand\R{{\mathbb R}}
\newcommand\N{{\mathbb N}}
\newcommand\Z{{\mathbb Z}}
\newcommand\al{\alpha}
\newcommand\be{\beta}
\newcommand\ga{\gamma}
\newcommand\eps{\varepsilon}
\newtheorem{theorem}{Theorem}
\newtheorem{lemma}[theorem]{Lemma}
\newtheorem{corollary}[theorem]{Corollary}
\theoremstyle{remark}
\begin{document}

\title[Linear relations with conjugates of a Salem number]{Linear relations with conjugates of a Salem number}


\author{Art\= uras Dubickas}
\address{Department of Mathematics and Informatics, Vilnius University, Naugarduko 24,
LT-03225 Vilnius, Lithuania}
\email{arturas.dubickas@mif.vu.lt}

\author{Jonas Jankauskas}
\address{Mathematik und Statistik,
Montanuniversit\"at Leoben,
Franz Josef Strasse 18,
A-8700 Leoben,
Austria}
\email{jonas.jankauskas@gmail.com}

\subjclass[2010]{11R06, 11R09}

\keywords{Additive linear relations, Salem numbers, Pisot numbers, totally real algebraic numbers}

\begin{abstract}
In this paper we consider linear relations with conjugates of a Salem number $\al$. 
We show that every such a relation arises from a linear relation between conjugates of the
correspon\-ding totally real algebraic integer $\al+1/\al$. It is also shown that the smallest degree of a Salem number with a nontrivial relation between its conjugates is $8$, whereas the smallest length of a nontrivial linear relation between the conjugates of a Salem number is $6$.  
\end{abstract}

\maketitle

\section{Introduction}

Let $\al$ be an algebraic number of degree $d \geq 2$ over $\Q$ with conjugates $\al_1=\al,\al_2,\dots,\al_d$. An additive linear relation
\begin{equation}\label{rysys}
k_1\al_1+k_2\al_2+\dots+k_d\al_d=0
\end{equation}
with some $k_1,k_2,\dots,k_d \in \Q$ is called {\it nontrivial} if 
$k_i \ne k_j$ for some $1 \leq i<j \leq d$.
Thus, the relation
$${\rm Trace}(\al):=\al_1+\al_2+\dots+\al_{d} =0$$
and its rational multiples $\sum_{j=1}^d r \al_j=0$, where $r \in \Q$, are {\it trivial} linear relations. (These hold for conjugates of any algebraic number whose trace is zero.)
Note that if \eqref{rysys} is a nontrivial linear relation with some $k_1,k_2,\dots,k_d \in \Q$ then, by multiplying all the $k_i$ by their common denominator, we can assume that $k_1,k_2,\dots,k_d \in \Z$. Accordingly, we call the sum $$|k_1|+|k_2|+\dots+|k_d| \in \N$$ the {\it length}
of the relation \eqref{rysys}. 

The investigation of nontrivial linear relations \eqref{rysys} in conjugates of algebraic numbers 
has begun with the papers of Kurbatov \cite{kurb1, kurb2, kurb3}. In \cite{smy2}, Smyth obtained some useful results and also formulated several natural conjectures on the possibility of \eqref{rysys} which are still wide open; see also his previous paper \cite{smy1}. Further results on this subject have been obtained by several authors in \cite{bds,dix,ds1,ds2,dub,dub2,girst,fl1,fl2,vall}. 

Recently, in \cite{DHJ}
it was shown that there is a unique Pisot number $\al=(1+\sqrt{3+2\sqrt{5}})/2$ with minimal polynomial $x^4-2x^3+x-1$ satisfying the nontrivial linear relation $$\al_1+\al_2-\al_3-\al_4=0$$ of length $4$. 
Recall that
an algebraic integer $\al>1$ is called a {\it Pisot number} if its other conjugates over $\Q$ (if any) all lie in the open unit disc $|z| < 1$. 
This answers two questions raised earlier in \cite{smy0}. For instance, this implies that no two conjugates of a Pisot number can have the same imaginary part. See also a subsequent paper \cite{DJ} for some further analysis of some simple linear relations of small length.

In the present paper, we investigate additive linear relations in conjugates of a Salem number. Recall that
an algebraic integer $\al>1$ is called a {\it Salem number} if  its other conjugates over $\Q$ all lie in the closed unit disc $|z| \le 1$ with at least one conjugate lying on the circle $|z|=1$. 

Throughout, if $\al>1$ is a Salem number 
of degree $d=2s \geq 4$ we label its conjugates
as in the theorem below.

\begin{theorem}\label{pirmoji}
Let $\al_1=\al>1$ be a Salem number of degree $d=2s \geq 4$ with conjugates $\al_1,\dots,\al_d$ satisfying
$\al_2=1/\al_1$ and 
$\al_{2j}=1/\al_{2j-1}=\overline{\al_{2j-1}}$ for $j=2,\dots,s$. 
If for some rational numbers $k_i$, $i=1,\dots,d$, and for some totally real algebraic number $\gamma$ we have 
\begin{equation}\label{duu}
k_1\al_1+k_2\al_2+\dots+k_d\al_d=\gamma,
\end{equation}
then 
$k_{2j-1}=k_{2j}$ for each $j=1,\dots,s$.
\end{theorem}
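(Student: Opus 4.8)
The plan is to reduce everything to the totally real algebraic integer $\be:=\al+1/\al$ and to exploit that \eqref{duu} forces a certain antisymmetric combination of the conjugates of $\al$ to be totally real. Put $\be_j:=\al_{2j-1}+\al_{2j}$ and $\delta_j:=\al_{2j-1}-\al_{2j}$ for $j=1,\dots,s$. Then $\be_1,\dots,\be_s$ are the conjugates of $\be$ and are all real; $\delta_1=\al-1/\al$ is real; and for $j\ge2$ we have $\delta_j=\al_{2j-1}-\overline{\al_{2j-1}}=2i\,\im\al_{2j-1}$, which is purely imaginary and nonzero (nonzero because $\al_{2j-1}\ne\pm1$, as $d\ge4$). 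Writing $a_j:=(k_{2j-1}+k_{2j})/2$ and $b_j:=(k_{2j-1}-k_{2j})/2$, one has $k_{2j-1}\al_{2j-1}+k_{2j}\al_{2j}=a_j\be_j+b_j\delta_j$, so \eqref{duu} becomes $\sum_{j=1}^s a_j\be_j+\sum_{j=1}^s b_j\delta_j=\ga$; the conclusion $k_{2j-1}=k_{2j}$ is equivalent to $b_j=0$ for all $j$, which is what I would prove.

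The first step is to show that the number $\eta:=\sum_{j=1}^s b_j\delta_j=\ga-\sum_{j=1}^s a_j\be_j$ is totally real. Let $L$ be the splitting field over $\Q$ of the minimal polynomial of $\al$ and $G=\gal(L/\Q)$. Since $\ga$ and each $\be_j$ lie in $L$, so does $\eta$, and every conjugate of $\eta$ over $\Q$ has the form $g(\eta)=g(\ga)-\sum_{j=1}^s a_j g(\be_j)$ with $g\in G$. Now $g(\ga)$ is a conjugate of the totally real $\ga$, hence real, and $g(\be_j)$ is a conjugate of the totally real $\be$, hence real; so $g(\eta)\in\R$ for every $g\in G$, i.e.\ $\eta$ is totally real.

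The second step extracts each $b_l$. Fix $l\in\{1,\dots,s\}$ and, by transitivity of $G$ on the conjugates of $\al$, choose $g\in G$ with $g(\al_{2l-1})=\al_1$; then also $g(\al_{2l})=g(1/\al_{2l-1})=1/\al_1=\al_2$. Since $g$ permutes the reciprocal pairs $\{\al_{2j-1},\al_{2j}\}$ and sends the $l$-th pair to the first, it sends every other pair to a pair of index $\ge2$. Hence $g(\delta_l)=\al_1-\al_2=\delta_1$ is real, while $g(\delta_j)=\pm\delta_{j'}$ with $j'\ge2$ is purely imaginary for $j\ne l$. As the $b_j$ are rational, $g(\eta)$ is the sum of the real number $b_l\delta_1$ and a purely imaginary number; since $g(\eta)\in\R$ by the first step, we get $g(\eta)=b_l\delta_1=b_l(\al-1/\al)$. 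Finally, apply an automorphism $h\in G$ with $h(\al)=\al_3$: then $h(g(\eta))=b_l(\al_3-1/\al_3)=b_l(\al_3-\al_4)=b_l\delta_2$ is purely imaginary, whereas $h(g(\eta))\in\R$ again by the first step; therefore $b_l\delta_2=0$, and since $\delta_2\ne0$ we conclude $b_l=0$. This gives $k_{2l-1}=k_{2l}$ for all $l$.

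The one substantial point is the first step: the symmetric part $\sum_j a_j\be_j$ of \eqref{duu} is automatically totally real because $\be$ is, so the totally real value $\ga$ on the right forces the antisymmetric part $\eta=\sum_j b_j\delta_j$ to be totally real as well. The rest is Galois bookkeeping — using one automorphism to leave only the single real summand $b_l\delta_1$ in $g(\eta)$, and a second to reveal its non-reality. The only thing that needs care is checking that the chosen $g$ really moves every pair other than the $l$-th off the first pair, so that $b_l\delta_1$ is the entire real part of $g(\eta)$.
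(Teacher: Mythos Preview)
Your proof is correct and follows essentially the same route as the paper's: apply a Galois automorphism sending the chosen pair to $\{\al,1/\al\}$, use complex conjugation to strip away the symmetric part, and conclude from the fact that $\al$ (equivalently $\delta_1=\al-1/\al$) is not totally real. Your explicit symmetric/antisymmetric decomposition $k_{2j-1}\al_{2j-1}+k_{2j}\al_{2j}=a_j\be_j+b_j\delta_j$ and the preliminary step isolating $\eta$ as totally real make the bookkeeping cleaner, but the substance --- the paper adds \eqref{kjh} to its complex conjugate where you take real parts of $g(\eta)$ --- is the same argument.
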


In particular, the theorem obviously holds for $\gamma=0$. So, every linear relation \eqref{rysys} in the conjugates $\al_i$, $i=1,\dots,d$, of a Salem number $\al$ is induced by the linear relation 
\begin{equation}\label{rysys1}
k_1\be_1+k_3\be_2+\dots+k_{2s-1}\be_s=0
\end{equation}
in conjugates of the respective totally real algebraic integer $\be_1=\be:=\al+1/\al>2$ whose other conjugates
are
$$\be_j=\al_{2j-1}+\al_{2j}=\al_{2j-1}+1/\al_{2j-1}=\al_{2j-1}+\overline{\al_{2j-1}}=2\Re \al_{2j-1} \in (-2,2)$$ for $j=2,\dots,s$.  If $f$ is the minimal polynomial of a Salem number $\al$ of degree $d=2s$ and $g$ is the minimal polynomial of 
$\be=\al+1/\al$ of degree $s$ then they are related by the identity
$
f(x)=x^{s} g(x+1/x).
$
Then, as in \cite{chr}, we call $g$ the {\it trace polynomial} of $f$. 
Note that $f$ is irreducible if and only if $g$ is irreducible. Also,
${\rm Trace}(\al)=\sum_{j=1}^d \al_j= \sum_{i=1}^s \be_i= {\rm Trace}(\be)$. 

By \cite{kurb3} (or \cite{ds1}), the only relation with conjugates $\be_1=\be,\dots,\be_p$ 
of an irreducible polynomial of prime degree $p$ can be of the form 
$$r\be_1+r\be_2+\dots+r\be_p=0,$$ where $r \in \Q$. Hence, 
the only possible linear relation with conjugates of a Salem number $\al$
with degree $2p$
is $r {\rm Trace}(\al)=0$, where $r \in \Q$. This relation is trivial.

So, in particular Theorem~\ref{pirmoji} implies that 

\begin{corollary}\label{keturi}
If $p$ is a prime number then there are no nontrivial linear relations in conjugates of a Salem number of degree $d=2p$.  
\end{corollary}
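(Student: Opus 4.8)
The plan is to deduce the corollary from Theorem~\ref{pirmoji} together with the classical theorem of Kurbatov on linear relations among conjugates of an algebraic number of prime degree. Suppose $\al_1=\al>1$ is a Salem number of degree $d=2p$, with conjugates $\al_1,\dots,\al_d$ labelled as in Theorem~\ref{pirmoji}, and suppose
\begin{equation*}
k_1\al_1+k_2\al_2+\dots+k_d\al_d=0
\end{equation*}
with $k_1,\dots,k_d\in\Q$. Since $0$ is a totally real algebraic number, Theorem~\ref{pirmoji} applied with $\gamma=0$ gives $k_{2j-1}=k_{2j}$ for every $j=1,\dots,p$. Writing $\be=\be_1=\al+1/\al$ and $\be_j=\al_{2j-1}+\al_{2j}$ for $j=1,\dots,p$, grouping the terms in pairs turns the relation into
\begin{equation*}
k_1\be_1+k_3\be_2+\dots+k_{2p-1}\be_p=0 .
\end{equation*}

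Next I would record the structural facts noted just before the corollary: the trace polynomial $g$ of the minimal polynomial $f$ of $\al$ satisfies $f(x)=x^{p}g(x+1/x)$, it has degree $p$, and it is irreducible because $f$ is. Hence $\be_1,\dots,\be_p$ is a complete set of conjugates of an algebraic number $\be$ of prime degree $p$. By the theorem of Kurbatov \cite{kurb3} (see also \cite{ds1}), the only $\Q$-linear relation among the conjugates of an algebraic number of prime degree $p$ is $r(\be_1+\be_2+\dots+\be_p)=0$ with $r\in\Q$. Therefore $k_1=k_3=\dots=k_{2p-1}=r$ for some $r\in\Q$, and since $k_{2j-1}=k_{2j}$ we get $k_1=k_2=\dots=k_d=r$. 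Thus the original relation is $r\,\mathrm{Trace}(\al)=0$, which is trivial; this proves that no nontrivial linear relation can exist in conjugates of a Salem number of degree $2p$.

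Since Theorem~\ref{pirmoji} already does the substantive work — eliminating the non-real conjugates and reducing everything to a relation among the conjugates of $\be$ — there is no serious obstacle left; the proof is a short deduction. The only points that need care are checking that one may legitimately take $\gamma=0$ in Theorem~\ref{pirmoji} (immediate, as $0\in\Q$ is totally real), and that the reduced relation is a relation among the \emph{complete} set of conjugates of $\be$, which is exactly the hypothesis under which Kurbatov's prime-degree result applies. One may also phrase the argument conceptually: Theorem~\ref{pirmoji} embeds the space of $\Q$-linear relations among $\al_1,\dots,\al_d$ into the space of relations among $\be_1,\dots,\be_p$, and the latter is one-dimensional, spanned by the trace relation, precisely because $p$ is prime.
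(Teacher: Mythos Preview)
Your proof is correct and follows essentially the same route as the paper: apply Theorem~\ref{pirmoji} with $\gamma=0$ to force $k_{2j-1}=k_{2j}$, reduce to a relation among the conjugates $\be_1,\dots,\be_p$ of the totally real number $\be=\al+1/\al$ of prime degree $p$, and then invoke Kurbatov's theorem to conclude all coefficients are equal. The paper presents this argument in the paragraph immediately preceding the corollary rather than as a separate formal proof, but the content is the same.
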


By \cite{mcsm}, it is known that there are Salem numbers of every integral trace. The degree of a Salem number with negative trace 
$-t$ is quite large if $t \in \N$ is large. Earlier, in \cite{mcsm0} 
Smyth has shown that there are Salem numbers with trace $-1$ of every even degree
$d \geq 8$. 

Here, by a similar argument, we show that

\begin{theorem}\label{keturi-1}
For any even $d \geq 6$ there is a Salem number of degree $d$
with trace $0$.    
\end{theorem}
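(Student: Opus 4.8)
We need: for any even $d \geq 6$, a Salem number of degree $d$ with trace $0$.

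The key fact from the excerpt: a Salem number $\alpha$ of degree $d = 2s$ corresponds to a totally real algebraic integer $\beta = \alpha + 1/\alpha > 2$ of degree $s$, via $f(x) = x^s g(x + 1/x)$, and $\mathrm{Trace}(\alpha) = \mathrm{Trace}(\beta)$. So we want a degree-$s$ totally real algebraic integer $\beta$ with one conjugate $> 2$, all other conjugates in $(-2, 2)$, and trace $0$. Equivalently, an irreducible monic integer polynomial $g$ of degree $s$ with trace $0$ (coefficient of $x^{s-1}$ equals $0$), exactly one root $> 2$, and all remaining roots in $(-2, 2)$; then $f(x) = x^s g(x + 1/x)$ is (up to irreducibility, which follows since $g$ is irreducible) the minimal polynomial of a Salem number of degree $2s$ with trace $0$.

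So for $d = 2s$ with $s \geq 3$, I want such a $g$. The natural approach is an explicit construction, following the ``similar argument'' to Smyth's trace $-1$ result cited from \cite{mcsm0}. Take $\beta$ slightly larger than $2$ and force the other conjugates to be symmetric about $0$ so the trace vanishes. Concretely: let $g_0(x) = \prod_{i=1}^{s-1}(x - 2\cos\theta_i)$ be (close to) the polynomial whose roots are $s-1$ points equidistributed in $(-2,2)$, which has small coefficients; then adjoin the root near $2$. A cleaner route: choose an auxiliary even polynomial. For instance, consider $g(x) = (x - c) h(x) - 1$ or $g(x) = x \cdot p(x^2) - a$ type constructions where $p$ is chosen so that all but one root lies in $(-2,2)$; the trace-$0$ condition is just ``$x^{s-1}$-coefficient vanishes,'' which is easy to arrange by making the perturbation affect only low-order coefficients.

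The main obstacle is the separate parities: $s$ even versus $s$ odd behave differently (for $s$ odd, $g$ having real roots symmetric about $0$ forces an odd number, so one root must be $0$ or the ``large'' root sits alone; for $s$ even one can try a genuinely even polynomial in $x$, but then $\beta$ and $-\beta$ would both be roots, and $-\beta < -2$ is not allowed). So I expect the construction to split into cases mod something (likely $s \bmod 2$, equivalently $d \bmod 4$), with a base construction for small degrees and then a way to step up the degree by $2$ — e.g. multiplying the trace polynomial situation by replacing $g(x)$ with a related polynomial of degree $s+2$, or by Smyth's interlacing/Salem-number-construction lemma (given a Salem number, produce one of degree $+2$ with controlled trace). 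I would: (1) verify the reduction to trace polynomials and state precisely the conditions on $g$; (2) give explicit families of $g$ for the residue classes of $s$ (small cases $s = 3, 4, 5, \dots$ checked by hand, using a Sturm-type or intermediate-value argument to locate the roots); (3) if an explicit uniform family is awkward, instead prove an inductive step ``from degree $d$ to degree $d+2$'' preserving trace $0$, then seed with $d = 6, 8$. The chief technical work is confirming the root locations (exactly one root $>2$, the rest in $(-2,2)$) for the chosen family, for which I would use sign changes of $g$ at $-2, 0, 2$ and at large $x$, plus the trace/discriminant bookkeeping to rule out complex roots; checking irreducibility can be arranged via an Eisenstein-type congruence or by noting the resulting $f$ is a Salem polynomial and invoking that Salem polynomials of the relevant shape are irreducible once $g$ is.
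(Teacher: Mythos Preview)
Your reduction to trace polynomials is correct, but the paper takes an entirely different route and your plan, as it stands, has a real gap. The paper does not work on the trace-polynomial side at all. Instead it uses Salem's classical construction directly: sequences of the shape $g_n(x)=x^n f(x)+\eps f^*(x)$ with $f$ the minimal polynomial of a small Pisot number. Such $g_n$ automatically have the correct root configuration (one root $>1$, its reciprocal, and the rest on $|z|=1$), so $g_n$ is the minimal polynomial of a Salem number of degree $n+\deg f$ and trace $0$ \emph{unless} it picks up a cyclotomic factor. The whole proof then becomes a bookkeeping exercise: Lemma~\ref{Lemma_cyclotomic_factors} (a Beukers--Smyth type criterion) pins down exactly which roots of unity can ever divide $g_n$, and for each such $\zeta$ the bad $n$ form a single arithmetic progression. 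Running this with three concrete choices of $f$ --- namely $x^3-x-1$, $x^2-x-1$, and $x^3-x^2-1$ --- yields three lists of bad even degrees whose intersection is empty, so every even $d\ge 6$ is covered by at least one sequence.

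The gap in your plan is that none of the suggested constructions is pinned down, and the one step you cannot finesse --- irreducibility of $g$ for \emph{every} $s\ge 3$ --- is exactly where the difficulty lives. Your remark that ``$f$ is irreducible once $g$ is'' is true but circular here: it just pushes the problem back to $g$, and an Eisenstein-style argument compatible with the rigid root-location constraints (one root in $(2,\infty)$, the rest in $(-2,2)$, second coefficient zero) for all $s$ is not at all routine. Likewise the proposed ``degree $d\to d+2$'' induction is never specified; there is no obvious operation on trace polynomials that raises the degree by one, preserves trace $0$, keeps exactly one root outside $[-2,2]$, and maintains irreducibility. The paper's approach sidesteps both issues: the Salem--Boyd construction guarantees the root configuration for free, and reducibility is completely controlled by a finite list of cyclotomic divisors, which is what makes the covering-by-arithmetic-progressions argument go through.
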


In Corollary~\ref{hbhbhb} below, we list of all $4$ possible Salem numbers of degree $6$ and trace $0$. Note that there are no Salem numbers 
of degree $4$ and trace $0$. Indeed, otherwise the minimal polynomial of
such a Salem number would be $x^4+ax^2+1$, with $a \in \Z$, which is impossible. 

Our next theorem describes the minimal length  of nontrivial linear relations between conjugates of a Salem number and the minimal degree
of a Salem number for which a nontrivial linear relation may occur. 

\begin{theorem}\label{antroji}
Suppose $\al>1$ is a Salem number with conjugates $\al_1=\al,\al_2,\dots, \al_d$ over $\Q$ labelled as in Theorem~\ref{pirmoji}. 
\begin{itemize}
\item[$(i)$]
If for some
integers  $k_1,k_2,\dots,k_d$, not all zero, the nontrivial linear relation \eqref{rysys} holds then its length 
must be at least $6$.
Furthermore, there exist Salem numbers $\al$ of degree $12$ whose six conjugates satisfy the following nontrivial linear relation of length $6$:
$$
\al_1+\al_2+\al_3+\al_4+\al_5+\al_6=0.
$$

\item[$(ii)$] The smallest degree of a Salem number with a nontrivial linear relation between its conjugates is $8$.  Furthermore, there exist Salem numbers $\al$ of degree $8$ whose conjugates satisfy the following nontrivial linear relation:
$$
\al_1+\al_2+\al_3+\al_4-\al_5-\al_6-\al_7-\al_8=0.
$$
\end{itemize}
\end{theorem}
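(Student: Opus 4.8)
The plan is to reduce everything to the trace polynomial $g$ of $\al$ via Theorem~\ref{pirmoji}. Suppose \eqref{rysys} holds with integers $k_1,\dots,k_d$ not all zero. Applying Theorem~\ref{pirmoji} with $\ga=0$ gives $k_{2j-1}=k_{2j}=:m_j$, and since $\al_{2j-1}+\al_{2j}=\be_j$ the relation \eqref{rysys} is equivalent to $m_1\be_1+\dots+m_s\be_s=0$ among the conjugates $\be_1=\be>2$ and $\be_2,\dots,\be_s\in(-2,2)$ of the totally real integer $\be=\al+1/\al$; its length is $\sum_{i=1}^{d}|k_i|=2\sum_{j=1}^{s}|m_j|$, twice the length of the $\be$-relation, and \eqref{rysys} is nontrivial precisely when the $m_j$ are not all equal. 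So for $(i)$ it suffices to show that a relation $\sum_j m_j\be_j=0$ with the $m_j$ not all equal has length at least $3$; since it is not the zero relation (else all $k_i=0$), I only need to exclude lengths $1$ and $2$. Length $1$ is impossible because $\be_j\ne0$ for all $j$ (the irreducible $g$ of degree $s\ge2$ has nonzero constant term). In length $2$ the only option is $\be_i=\pm\be_j$ with $i\ne j$: the $+$ sign is excluded as the conjugates are distinct, and $\be_i=-\be_j$ is excluded because an automorphism $\sigma$ of the splitting field with $\sigma(\be_i)=\be$ would make $\sigma(\be_j)=-\be<-2$ a conjugate of $\be$, whereas every conjugate of $\be$ other than $\be$ lies in $(-2,2)$. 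Hence the $\be$-relation has length $\ge3$ and \eqref{rysys} has length $\ge6$. For the lower bound in $(ii)$, a Salem number has even degree $2s\ge4$, and if $2s\in\{4,6\}$ then $s$ is prime, so Corollary~\ref{keturi} forbids a nontrivial relation; thus the least degree admitting one is $\ge8$.

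Next I would exhibit the two families, each time writing down a trace polynomial $g$ and checking that $f(x)=x^sg(x+1/x)$ is a Salem polynomial. For this it is enough that $g$ be irreducible with exactly one root larger than $2$ and all other roots in $(-2,2)$; then $f$ is irreducible of degree $2s$ (one has $f(x)=\prod_j(x^2-\be_j x+1)$), its two real roots are $\al>1$ and $1/\al$, the remaining $2(s-1)$ reciprocal non-real roots lie on $|z|=1$, and any linear relation among the roots of $g$ lifts, as above, to one among the conjugates of $\al$. For $(ii)$ I would take $g(x)=(x^2-2x-2)^2-5=x^4-4x^3+8x-1$: it has no rational root and no factorization into two integer quadratics, so it is irreducible; its roots are $1\pm\sqrt{3\pm\sqrt5}$, of which exactly one exceeds $2$ while the other three lie in $(-2,2)$; and since $g=(x^2-2x-2)^2-5$ the roots split into two pairs each of sum $2$, giving $\be_1+\be_2-\be_3-\be_4=0$, which lifts to $\al_1+\al_2+\al_3+\al_4-\al_5-\al_6-\al_7-\al_8=0$ for the resulting degree-$8$ Salem number. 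For $(i)$ I need a sextic three of whose roots (one being the root exceeding $2$) sum to $0$; I would produce it as $g=c\bar c$, where $c(x)=x^3-(4+\sqrt2)x-(2+\sqrt2)$ is a trace-zero cubic over $\Q(\sqrt2)$ and $\bar c$ its conjugate, so explicitly $g(x)=(x^3-4x-2)^2-2(x+1)^2=x^6-8x^4-4x^3+14x^2+12x+2$. One then checks that $c$ and $\bar c$ are totally real, that $g$ has exactly one root larger than $2$ with the other five in $(-2,2)$, and that $g$ has no rational factor of degree $1$ or $2$ --- the latter forcing $c,\bar c$ to be irreducible over $\Q(\sqrt2)$ and hence $g$ irreducible over $\Q$. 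Since the roots of $c$ sum to $0$, the associated degree-$12$ Salem number satisfies $\al_1+\al_2+\al_3+\al_4+\al_5+\al_6=0$.

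The lower bounds fall out of Theorem~\ref{pirmoji} and Corollary~\ref{keturi} with little effort; the hard part is the constructions, and among them the sextic. The difficulty is that one must simultaneously secure irreducibility over $\Q$, the exact root distribution (one root larger than $2$, the remaining five strictly between $-2$ and $2$, so that $f$ really is a Salem polynomial), and the vanishing of $\mathrm{Tr}_{\Q(\be)/\Q(\sqrt2)}(\be)$; the admissible ranges for the integer coefficients of $c$ turn out to be tight, so a short search over small squarefree $d$ and small coefficients should be needed to land a working example, after which the verifications are routine.
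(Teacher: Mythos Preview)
Your lower-bound arguments for both parts are essentially identical to the paper's: reduce via Theorem~\ref{pirmoji} to a relation among the $\be_j$, rule out $\be$-length $1$ and $2$ exactly as you do, and invoke Corollary~\ref{keturi} for $d\in\{4,6\}$.

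For the constructions your strategy coincides with the paper's --- build the trace polynomial $g$ as a norm from $\Q(\sqrt d)$ so that a prescribed subset of roots carries the relation --- but the specific polynomials differ. For $(i)$ the paper takes $c(x)=x^3-(5+\sqrt2)x-(3+2\sqrt2)$ (and its conjugate), obtaining $g(x)=x^6-10x^4-6x^3+23x^2+22x+1$; your $c(x)=x^3-(4+\sqrt2)x-(2+\sqrt2)$ with $g(x)=x^6-8x^4-4x^3+14x^2+12x+2$ works just as well (your root-location claims check against Lemma~\ref{trecioji}, and your irreducibility argument via ``no rational factor of degree $\le2$ $\Rightarrow$ $c$ irreducible over $\Q(\sqrt2)$ $\Rightarrow$ $g$ irreducible'' is valid because $c\notin\Q[x]$ forces any root of $c$ to have degree $6$ over $\Q$). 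For $(ii)$ the paper takes a less direct route through Lemma~\ref{trecioji-1}, starting from $h(x)=x^2+4x+1$ and passing through $(x+1/x)(1-x-1/x)$ to obtain a degree-$8$ Salem number whose $\be_j$ pair up with sums equal to $1$; your construction $g(x)=(x^2-2x-2)^2-5=x^4-4x^3+8x-1$ is more elementary and gives pairs summing to $2$, which is equally good. Both routes yield the same relation $\al_1+\cdots+\al_4-\al_5-\cdots-\al_8=0$, and your irreducibility check for this quartic is complete as written.
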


\section{Auxiliary results}

We begin with two simple lemmas. 

\begin{lemma}\label{trecioji}
The cubic polynomial $x^3-ax+b \in \R[x]$ has three distinct roots in the interval $(-2,2)$ iff $0<a<4$ and
\begin{equation}\label{pima}
 \max\Big(2a-8, -\frac{2a\sqrt{a}}{3\sqrt{3}}\Big)<b < \min\Big(8-2a, \frac{2a\sqrt{a}}{3\sqrt{3}}\Big),
\end{equation}
and two distinct roots in $(-2,2)$ and one root in $(2,+\infty)$ iff $3<a<12$
and
\begin{equation}\label{pima2}
-\frac{2a\sqrt{a}}{3\sqrt{3}}< b < -|2a-8|.
\end{equation}
\end{lemma}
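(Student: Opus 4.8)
The plan is to study the real cubic $p(x)=x^3-ax+b$ through the shape of its graph. If $a\le 0$ then $p'(x)=3x^2-a$ vanishes at most once, so $p$ is strictly increasing and has a single real root; hence in both configurations of the lemma we must have $a>0$. For $a>0$ the cubic $p$ has a local maximum at $x=-\sqrt{a/3}$ and a local minimum at $x=\sqrt{a/3}$, and the substitution $a=3t^2$, $t=\sqrt{a/3}>0$, gives $p(\pm t)=\mp 2t^3+b=b\mp\frac{2a\sqrt a}{3\sqrt 3}$. Thus $p$ has three distinct real roots $r_1<r_2<r_3$ if and only if the local maximum is positive and the local minimum negative, i.e.
\[
-\frac{2a\sqrt a}{3\sqrt 3}<b<\frac{2a\sqrt a}{3\sqrt 3},
\]
and in that case $r_1<-\sqrt{a/3}<r_2<\sqrt{a/3}<r_3$.

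Assuming three distinct real roots, I would next localize them relative to $\pm 2$. As $p$ is strictly increasing on $(-\infty,-\sqrt{a/3}\,]$ and on $[\,\sqrt{a/3},+\infty)$, and $p(-2)=-8+2a+b$, $p(2)=8-2a+b$, one obtains the equivalences $r_1>-2\iff a<12$ and $b<8-2a$; $r_3<2\iff a<12$ and $b>2a-8$; and, once $a<12$, $r_3>2\iff b<2a-8$. Moreover $a<12$ forces $r_2\in(-\sqrt{a/3},\sqrt{a/3})\subset(-2,2)$ automatically. This is the step that needs some care: one has to check that the sign of $p(\pm 2)$ really does decide on which side of $\pm 2$ the outer roots $r_1,r_3$ fall, and this is exactly why the auxiliary bound $a<12$ — equivalently, both critical points lying inside $(-2,2)$ — shows up.

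It remains to combine these facts. For the first statement, $p$ has three distinct roots in $(-2,2)$ iff it has three distinct real roots with $r_1>-2$ and $r_3<2$, i.e. iff $0<a<12$ and $\max\!\big(2a-8,-\tfrac{2a\sqrt a}{3\sqrt 3}\big)<b<\min\!\big(8-2a,\tfrac{2a\sqrt a}{3\sqrt 3}\big)$; this is precisely \eqref{pima}, and such a $b$ exists exactly when $2a-8<8-2a$, so the constraint on $a$ reduces to $0<a<4$. For the second statement, $p$ has two distinct roots in $(-2,2)$ and one in $(2,+\infty)$ iff it has three distinct real roots with $r_1>-2$ and $r_3>2$, i.e. iff $0<a<12$, $-\tfrac{2a\sqrt a}{3\sqrt 3}<b$, $b<8-2a$ and $b<2a-8$; the last two conditions read $b<\min(8-2a,2a-8)=-\lvert 2a-8\rvert$, and since $-\lvert 2a-8\rvert\le 0<\tfrac{2a\sqrt a}{3\sqrt 3}$ the upper bound $b<\tfrac{2a\sqrt a}{3\sqrt 3}$ is redundant, leaving exactly \eqref{pima2}. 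Finally, to replace $0<a<12$ by $3<a<12$ I would check that \eqref{pima2} is empty for $0<a\le 3$: this amounts to $\tfrac{2a\sqrt a}{3\sqrt 3}\le 8-2a$ on $(0,3]$, which holds because $8-2a-\tfrac{2}{3\sqrt 3}a^{3/2}$ is strictly decreasing for $a>0$ and vanishes at $a=3$. Everything else is routine manipulation of these linear inequalities in $b$, and I foresee no genuine obstacle beyond keeping the two cases apart.
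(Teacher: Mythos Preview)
Your proof is correct and follows essentially the same route as the paper: both arguments rest on the sign of $p$ at the two critical points $\pm\sqrt{a/3}$ (governing whether there are three distinct real roots) together with the sign of $p(\pm 2)$ (locating the outer roots $r_1,r_3$ relative to $\pm 2$). The only organizational difference is that the paper imposes all four sign conditions simultaneously, whereas you first secure three real roots via $|b|<\tfrac{2a\sqrt a}{3\sqrt3}$ and then localize $r_1,r_3$; the resulting inequalities and the reductions to $0<a<4$ and $3<a<12$ are identical.
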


\begin{proof}
Set $$h(x):=x^3-ax+b.$$Since
$h'(x)=3x^2-a$, the polynomial $h$ has only one real root 
if $a \leq 0$. 

Suppose $a>0$.  Set $x_0:=\sqrt{a/3}$. 
Then, the polynomial $h$
has three distinct roots in $(-2,+\infty)$ iff $-2<-x_0$ (i.e., $0<a<12$), 
\begin{equation}\label{mm1}
h(-2)=-8+2a+b<0, 
\end{equation}
\begin{equation}\label{mm2}
h(-x_0)=\frac{2a\sqrt{a}}{3\sqrt{3}}+b>0
\end{equation}
and
\begin{equation}\label{mm3}
h(x_0)=-\frac{2a\sqrt{a}}{3\sqrt{3}}+b<0.
\end{equation}

Clearly, all three roots belong to $(-2,2)$ if, in addition, we have $h(2)=8-2a+b>0$. Combined with \eqref{mm1}, \eqref{mm2} and \eqref{mm3}
this proves \eqref{pima}. Evidently, \eqref{pima} is only possible for some $b$ when its left hand side does not exceed its right hand side, that is, when $0<a<4$.

Similarly, two roots of $h$ are in $(-2,2)$ and one root in $(2,+\infty)$ when one has \eqref{mm1}, \eqref{mm2}, and
$h(2)=8-2a+b<0$. (As $h$ is increasing in the interval $(x_0,2)$, the inequality \eqref{mm3} automatically holds.) Evidently, all these inequalities combine into \eqref{pima2}. Here, as $0<a<12$, it is easy to see that the inequality $$|2a-8|<\frac{2a\sqrt{a}}{3\sqrt{3}}$$ holds only for $a$ in the range $3<a<12$, so only for such $a$ one can find some $b$ satisfying
\eqref{pima2}.
\end{proof}

Observe that there are only $7$ pairs of integers $(a,b)$ satisfying the conditions $3<a<12$ and \eqref{pima2}, namely,
$(4,-1)$, $(4,-2)$, $(4,-3)$, $(5,-3)$, $(5,-4)$, $(6,-5)$ and $(7,-7)$. 
However, the polynomials $x^3-4x-3$, $x^3-5x-4$ and 
$x^3-6x-5$ are reducible. Other four polynomials $x^3-4x-1$, $x^3-4x-2$, $x^3-5x-3$ and 
$x^3-7x-7$ are irreducible. So, Lemma~\ref{trecioji} implies that 

\begin{corollary}\label{hbhbhb}
There are exactly four Salem numbers of degree $6$ with trace $0$. 
Their minimal polynomials are: $$x^6-x^4-x^3-x^2+1, \quad
x^6-x^4-2x^3-x^2+1,$$ 
$$x^6-2x^4-3x^3-2x^2+1, \quad x^6-4x^4-7x^3-4x^2+1.$$ 
\end{corollary}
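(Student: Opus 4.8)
The plan is to reduce the classification to a statement about trace polynomials and then read off the finitely many candidates from Lemma~\ref{trecioji} together with the observation already recorded just above. First I would set up a correspondence. If $\al>1$ is a Salem number of degree $6$, its minimal polynomial $f$ is reciprocal, so by the discussion following Theorem~\ref{pirmoji} we have $f(x)=x^3g(x+1/x)$, where $g\in\Z[x]$ is the monic, irreducible trace polynomial of $f$, and $\mathrm{Trace}(\al)=\mathrm{Trace}(\be)=\mathrm{Trace}(g)$ with $\be=\al+1/\al$. Hence $\al$ has trace $0$ exactly when $g$ has no $x^2$-term, i.e.\ $g(x)=x^3-ax+b$ for some $a,b\in\Z$; in that case the roots of $g$ are $\be_1=\al+1/\al>2$ together with $\be_2=2\Re\al_3$ and $\be_3=2\Re\al_5$ in $(-2,2)$, the three being distinct since $g$ is irreducible.

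Next I would verify the converse, so that the correspondence becomes a bijection. Suppose $g(x)=x^3-ax+b\in\Z[x]$ is irreducible with exactly one root in $(2,+\infty)$ and two (necessarily distinct) roots in $(-2,2)$. Then $f(x):=x^3g(x+1/x)$ is monic of degree $6$, reciprocal, and irreducible because $g$ is; for each root $\be$ of $g$ the equation $x+1/x=\be$ contributes two roots of $f$ with product $1$, namely a real pair with one root in $(1,+\infty)$ and one in $(0,1)$ when $\be>2$, and a complex-conjugate pair on $|z|=1$ when $\be\in(-2,2)$. Thus $f$ has a single root $>1$, its reciprocal, and four roots on the unit circle, so its largest root is a Salem number of degree $6$ whose trace equals $\mathrm{Trace}(g)=0$. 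This identifies the Salem numbers we want with the irreducible integer cubics $x^3-ax+b$ satisfying the root configuration of the second half of Lemma~\ref{trecioji}.

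It then remains to enumerate. By that half of Lemma~\ref{trecioji}, the configuration forces $3<a<12$ and the inequality \eqref{pima2}; as recorded just above, these leave only the seven pairs $(a,b)\in\{(4,-1),(4,-2),(4,-3),(5,-3),(5,-4),(6,-5),(7,-7)\}$, of which a rational-root check discards the reducible cubics $x^3-4x-3$, $x^3-5x-4$, $x^3-6x-5$. For each of the four surviving irreducible cubics $x^3-4x-1$, $x^3-4x-2$, $x^3-5x-3$, $x^3-7x-7$, I would then compute $f(x)=x^3g(x+1/x)$ using $x^3+x^{-3}=(x+1/x)^3-3(x+1/x)$, obtaining precisely the four sextics in the statement.

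There is no real obstacle here; the only point demanding care is the bookkeeping in both directions of the correspondence --- that the number and location of the roots of $f$ and of $g$ match up, and that irreducibility and degree are preserved --- after which the finiteness and the explicit list come straight from Lemma~\ref{trecioji} and the observation preceding the corollary, and writing down the minimal polynomials is a one-line substitution.
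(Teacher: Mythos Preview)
Your proposal is correct and follows essentially the same approach as the paper: reduce to trace polynomials $g(x)=x^3-ax+b$, invoke the second half of Lemma~\ref{trecioji} and the enumeration of the seven integer pairs $(a,b)$ recorded just before the corollary, discard the three reducible cubics, and expand $x^3g(x+1/x)$ for the four survivors. The only difference is that you spell out the bijection between degree-$6$ trace-$0$ Salem numbers and such cubics in both directions more carefully than the paper does (it takes the correspondence for granted from the earlier discussion), which is fine.
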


\begin{lemma}\label{trecioji-1}
Let $h(x) \in \Z[x]$ be a monic polynomial of degree $k \geq 2$ with 
$k-1$ roots in the interval $(-2,1/4)$ and one root in $(-6,-2)$.  Then,
$$f(x):=(-1)^k x^{2k} h\big((x+1/x)(1-x-1/x)\big) \in \Z[x]$$ is a monic reciprocal polynomial of degree $4k$ which defines a Salem number of degree $d=4k$ in case it is irreducible over $\Q$. Moreover, the conjugates of this Salem number $\al$ labelled as in Theorem~\ref{pirmoji}
satisfy 
\begin{equation}\label{aaa}
\al_1+\al_2+\al_3+\al_4=\dots=\al_{4k-3}+\al_{4k-2}
+\al_{4k-1}+\al_{4k}=1.
\end{equation}
\end{lemma}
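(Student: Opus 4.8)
The plan is to unravel the substitution $x \mapsto (x+1/x)(1-x-1/x)$ in two stages. First set $y = x + 1/x$, so that as $x$ ranges over the unit circle $|x| = 1$ the quantity $y$ ranges over the real interval $[-2,2]$, and as $x$ ranges over $(1,\infty)$ (resp. over a real interval producing a Pisot-type conjugate) $y$ ranges over $(2,\infty)$; this is the same trace-polynomial bookkeeping already used after Theorem~\ref{pirmoji}. Then set $w = y(1-y) = y - y^2$. On the interval $y \in (-2,2)$ the function $w = y - y^2$ takes values in $(-6, 1/4]$, with maximum $1/4$ at $y = 1/2$; and on $y \in (2,\infty)$ it takes values in $(-\infty, -2)$, decreasing. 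So the hypothesis that $h$ is monic of degree $k$ with $k-1$ roots in $(-2,1/4)$ and one root in $(-6,-2)$ is engineered so that: each of the $k-1$ roots of $h$ in $(-2,1/4)$ has (generically) two preimages $y \in (-2,2)$ under $w = y-y^2$, hence four preimages $x$ on the unit circle; and the one root of $h$ in $(-6,-2)$ has one preimage $y \in (2,\infty)$ together with one preimage $y \in (-\infty,-2)\subset(-6,-2)$'s complement --- wait, more carefully: $w \in (-6,-2)$ is attained once by $y \in (2, (1+\sqrt{1+4\cdot 2})/2$-ish$)$ giving $y>2$, and once by $y$ slightly less than $-2$. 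I would check that the root in $(-6,-2)$ yields exactly one $y > 2$ (hence a real $x > 1$, the Salem number) and one $y \in (-3,-2)$ (hence $x$ real negative, a conjugate inside... no, outside: $y \in (-3,-2)$ gives $x+1/x \in (-3,-2)$, so $x \in (-1,0)$, inside the unit disc). Thus among the $4k$ roots of $f$, exactly one lies outside the closed unit disc, its reciprocal lies inside, and the remaining $4k-2$ lie on $|x|=1$: this is precisely the Salem condition, provided $f$ is irreducible.

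Concretely the steps are: (1) verify $f(x) := (-1)^k x^{2k} h\big((x+1/x)(1-x-1/x)\big)$ is a genuine polynomial in $\Z[x]$ --- one checks that $(x+1/x)(1-x-1/x)$ has a pole of order $2$ at $x=0$, so composing with $h$ of degree $k$ gives a pole of order $2k$, cleared by $x^{2k}$; the sign $(-1)^k$ makes it monic since the leading term comes from $(-1/x \cdot x)^k \cdot$(lead of $h$)$= (-1)^k$ times $x^{-2k}$ times (stuff), and multiplying by $(-1)^k x^{2k}$ normalizes it. (2) Verify $f$ is reciprocal: the map $x \mapsto 1/x$ fixes both $x+1/x$ and $1-x-1/x$, hence fixes the argument of $h$, and $x^{2k}(1/x)^{2k}\cdot$(the functional relation) gives $x^{4k} f(1/x) = f(x)$. (3) Count roots by the two-stage substitution above, using the intermediate value theorem on $y - y^2$ over each subinterval, concluding exactly one root of $f$ outside $|z|\le 1$, its inverse inside, the rest on the circle; hence, if irreducible, $\al$ is a Salem number of degree $4k$. (4) Establish relation~\eqref{aaa}: for each root $\rho$ of $h$, its four preimages $x$ come in two reciprocal/conjugate pairs $\{x, 1/x\} = \{x,\bar x\}$ with $x + 1/x = y$ and $x' + 1/x' = y'$ where $y, y'$ are the two roots of $y - y^2 = \rho$, i.e. $y + y' = 1$; therefore the sum of these four conjugates of $\al$ equals $y + y' = 1$. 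Matching the labelling of Theorem~\ref{pirmoji} (where $\al_{2j-1}+\al_{2j}=\be_j=2\Re\al_{2j-1}$), the four conjugates sitting over one root $\rho$ of $h$ are $\al_{4i-3},\al_{4i-2},\al_{4i-1},\al_{4i}$ and their sum is $\be_{2i-1}+\be_{2i} = y + y' = 1$, which is exactly~\eqref{aaa}.

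The main obstacle I anticipate is step (3): being careful about the \emph{endpoints and multiplicities} of the two-stage preimage count. One must rule out that a root of $h$ lands exactly at the critical value $1/4$ of $y - y^2$ (which would merge two conjugates on the circle and drop the degree) --- this is why the hypothesis uses the open interval $(-2,1/4)$; one must confirm the root in $(-6,-2)$ produces genuinely one preimage with $y>2$ (giving $|x|>1$) and check that the other preimage has $y < -2$ so that its $x$ is real and inside the unit disc rather than on it, and that $y \ne \pm 2$ so no spurious root at $x = \pm 1$ appears. One must also confirm that distinct roots of $h$ give disjoint fibers (automatic since $w = y-y^2$ and $y = x+1/x$ are functions), so that all $4k$ roots of $f$ are accounted for and, generically, distinct --- the possible non-squarefreeness is harmless since we only claim the Salem conclusion "in case $f$ is irreducible." A minor secondary point is verifying the exact leading coefficient and integrality in step (1), which is a routine degree/valuation computation but must be done to justify "monic" and "$\in \Z[x]$."
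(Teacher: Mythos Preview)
Your two-stage decomposition --- first passing to the trace polynomial $g(y)=(-1)^k h(y(1-y))$ and then to $f(x)=x^{2k}g(x+1/x)$ --- is exactly the route the paper takes, and your derivation of $\beta_{2j-1}+\beta_{2j}=1$ leading to \eqref{aaa} is correct.

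However, your root count in step~(3) contains an error that, as written, would actually destroy the Salem conclusion. For the distinguished root $\gamma_1\in(-6,-2)$ of $h$, the two $y$-preimages under $y(1-y)=\gamma_1$ are $\beta_1=(1+\sqrt{1-4\gamma_1})/2$ and $\beta_2=(1-\sqrt{1-4\gamma_1})/2$; since $1-4\gamma_1\in(9,25)$, one has $\sqrt{1-4\gamma_1}\in(3,5)$, so $\beta_1\in(2,3)$ while $\beta_2\in(-2,-1)$ --- not $(-3,-2)$ as you wrote. This matters: had $\beta_2$ fallen below $-2$, the two $x$-preimages of $\beta_2$ under $x+1/x=\beta_2$ would be real and reciprocal, with one of modulus greater than $1$, giving $f$ a \emph{second} root outside the closed unit disc and ruling out the Salem property. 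With the correct range $\beta_2\in(-2,-1)\subset(-2,2)$, the $x$-preimages of $\beta_2$ lie on $|x|=1$; together with the fact that the preimages $\beta_3,\dots,\beta_{2k}$ of $\gamma_2,\dots,\gamma_k\in(-2,1/4)$ all lie in $(-1,2)\subset(-2,2)$, one obtains the desired count: only $\beta_1>2$ produces the real pair $\alpha>1$ and $1/\alpha$, while the remaining $4k-2$ roots of $f$ lie on the unit circle. In short, the condition you said you wanted to verify --- that the second preimage satisfies $y<-2$ --- is the opposite of what is needed; the interval $(-6,-2)$ for the special root of $h$ is chosen precisely so that $\beta_2$ stays inside $(-2,2)$.
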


\begin{proof}
Let $\ga_1 \in (-6,-2)$ and $\ga_2<\dots<\ga_{k} \in (-2,1/4)$ be the roots of $h$. Consider the monic polynomial $g(x):=(-1)^k h(x(1-x))$. Then, its roots
\begin{equation}\label{bbbb}
\be_{2j-1}:=\frac{1 + \sqrt{1-4\ga_j}}{2} \quad 
\text{and} \quad
\be_{2j}:=\frac{1 - \sqrt{1-4\ga_j}}{2},
\end{equation}
where $j=1,\dots,k$,
satisfy $\be_1=(1+\sqrt{1-4\ga_1})/2>2$, $\be_2=(1-\sqrt{1-4\ga_1})/2 \in (-2,-1)$ and $\be_3,\dots,\be_{2k} \in (-1,2)$. So, $g$ has $2k-1$
roots in $(-2,2)$ and one root greater than $2$. Clearly, by \eqref{bbbb}, we have
\begin{equation}\label{bee}
\be_{1}+\be_2=\dots=\be_{2k-1}+\be_{2k}=1.
\end{equation}

Now, as the roots $\al_1=\al>1,\al_2=1/\al,\dots,\al_{4k-1},\al_{4k}=1/\al_{4k-1}$ of $$f(x)=x^{2k}g(x+1/x)=(-1)^k x^{2k}h\big((x+1/x)(1-x-1/x)\big)$$ satisfy  $\be_j=\al_{2j-1}+\al_{2j}=\al_{2j-1}+1/\al_{2j-1}$ for each $j=1,\dots,2k$, we see that \eqref{bee} implies \eqref{aaa}. Furthermore,   $\al$ is a Salem number of degree $4k$ provided that $f$ is irreducible
over $\Q$. 
\end{proof}

We made some calculations related to Lemma~\ref{trecioji-1}. It turns out that there exactly $15$ quadratic polynomials $h$ satisfying the conditions of the lemma and thus producing $15$ Salem numbers 
of degree $8$ satisfying \eqref{aaa} with $k=2$. For instance, $x^2+4x+1$ is such a quadratic polynomial $h$. Also, there are exactly 
$30$ cubic, $20$ quartic and $4$ quintic polynomials $h$ producing
$30$ Salem numbers of degree $12$ (satisfying \eqref{aaa} with $k=3$), $20$ Salem numbers of degree $16$ (satisfying \eqref{aaa} with $k=4$) and $4$ Salem numbers of degree $20$ (satisfying \eqref{aaa} with $k=5$), respectively.  In the case $k=5$, the example of $h$ is  $$x^5+9x^4+22x^3+16x^2-x-1.$$ This gives a Salem number $\al$ of degree $20$ with minimal polynomial 
$$x^{20}-5x^{19}+11x^{18}-19x^{17}+26x^{16}-29x^{15}+
27x^{14}-19x^{13}+8x^{12}+x^{11}$$ $$-5x^{10}+
x^9+8x^8-19x^7+27x^6-29x^5+
26x^4-19x^3+11x^2-5x+1$$
whose conjugates satisfy \eqref{aaa} with $k=5$. 

The first part of the next lemma was inspired by Lemma 1 of Beukers and Smyth in \cite{beusmy}. Essentially, it is  a version of their algorithm \cite{beusmy} to locate cyclotomic points on curves, specialized to the case of sequences of polynomials that produce Salem numbers from Pisot numbers. Also, the second part of Lemma \ref{Lemma_cyclotomic_factors} is loosely related to the work on irreducibility of polynomials of the type $x^nf(x)+g(x) \in \Z[x]$ and on the sequences  and covering systems of integers by Schinzel \cite{schi}, Filaseta et al. \cite{fifoko, fima}, although these irreducibility results are not of direct relevance here. Throughout, $f^*(x) = x^{\deg{f}}f(1/x)$ stands for the \emph{reciprocal polynomial} of $f(x)$.

\begin{lemma}\label{Lemma_cyclotomic_factors}
For $n \in \N$, consider the sequence of polynomials
\[
g_n(x) := x^nf(x) + \eps f^{*}(x),
\] where $\eps \in \{-1, 1\}$ and $f(x) \in \Z[x]$ satisfies $f^*(x) \ne \pm f(x)$.  
Suppose that a root of unity $\zeta \in \C$ is also a root of some polynomial $g_n(x)$. Then, $\zeta$ must appear among the zeros of at least one of the following polynomials:
\[
f(x^2)f^*(x)^2 + \eps f(x)^2f^*(x^2), \qquad f(x)^2f^*(-x^2) \pm  f(-x^2)f^*(x)^2,
\]
\[
f(x)f^*(-x) \pm  f(-x)f^*(x).
\]
In particular, if none of these polynomials is identically zero, then the set of all such possible roots of unity $\zeta$ is finite.

In addition to this, if $f(\zeta) \ne 0$ then the root of unity $\zeta$ is a zero of $g_n(z)$ if and only if $n$ belongs to the arithmetic progression $\ell k+r$, $k=0, 1, 2, \dots$, where $r$ is a fixed integer in the range $0 \leq r < \ell$ and $\ell={\rm ord}(\zeta)$ denotes the multiplicative order of $\zeta$. 
\end{lemma}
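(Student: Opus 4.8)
The plan is to separate the two assertions and handle the "finiteness" part first. Write $\zeta$ for a root of unity that is a zero of some $g_n(x) = x^n f(x) + \eps f^*(x)$. The crucial elementary observation is that complex conjugation sends $\zeta \mapsto \bar\zeta = 1/\zeta = \zeta^{-1}$, so evaluating $g_n$ at $\zeta$ and taking the absolute value gives $|f(\zeta)| = |f^*(\zeta)|$; more usefully, $\zeta^n = -\eps f^*(\zeta)/f(\zeta)$ whenever $f(\zeta)\ne 0$, an identity whose right-hand side is a fixed algebraic number of modulus $1$. I would first dispose of the (harmless) case $f(\zeta)=0$: then automatically $f^*(\zeta) = \zeta^{\deg f} f(1/\zeta) = \zeta^{\deg f}\,\overline{f(\zeta)} = 0$ as well (using $1/\zeta = \bar\zeta$ and that $f$ has real — indeed integer — coefficients), so $\zeta$ is a common root of $f$ and $f^*$, hence certainly a root of each of the five displayed symmetric combinations; such $\zeta$ are finite in number since $f$ has finitely many roots. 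So from now on assume $f(\zeta)\ne 0$.

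Now I would run the Beukers–Smyth-style elimination of the exponent $n$. From $\zeta^n = -\eps f^*(\zeta)/f(\zeta)$ I want to produce a single polynomial, independent of $n$, vanishing at $\zeta$. The trick is to combine the relation for $n$ with the relation obtained by replacing $\zeta$ by $-\zeta$ or by $\zeta^2$ (both again roots of unity) and comparing the resulting powers. Concretely: if $(-\zeta)$ also satisfies some $g_m$ then $(-\zeta)^m = -\eps f^*(-\zeta)/f(-\zeta)$; squaring $\zeta^n$ gives $\zeta^{2n} = f^*(\zeta)^2/f(\zeta)^2$, while $(\zeta^2)^n = f^*(\zeta^2)/f(\zeta^2)$ comes from the $g_n$-relation applied at $\zeta^2$ — equating these two expressions for $\zeta^{2n}$ and clearing denominators yields exactly
\[
f(x^2) f^*(x)^2 + \eps f(x)^2 f^*(x^2)
\]
(up to sign and the $\eps$ bookkeeping), a polynomial with $\zeta$ as a root. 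The variants with $-x^2$ and with $\pm x$ arise the same way by also tracking parity of $n$ (whether $\zeta^n$ equals $+(\zeta^2)^{n/2}$ or one must pass through $\zeta\cdot(\zeta^2)^{(n-1)/2}$), which is where the two further families
\[
f(x)^2 f^*(-x^2) \pm f(-x^2) f^*(x)^2, \qquad f(x) f^*(-x) \pm f(-x) f^*(x)
\]
come from. One then checks that at least one of these five polynomials genuinely has $\zeta$ among its zeros; and if none is identically zero, each has finitely many roots, so the union over all of them is a finite set containing every such $\zeta$, giving the "in particular" claim. The main obstacle I anticipate is precisely the careful case analysis on $n \bmod 2$ (and the signs of $\eps$) needed to see that the three listed shapes — with the indicated $\pm$ — exhaust all possibilities; this is bookkeeping rather than deep, but it is easy to drop a case, so I would organize it by first writing $n = 2q$ or $n = 2q+1$ and then, in the odd case, further comparing $\zeta$ against $\zeta^2$ replaced by $-\zeta^2$.

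For the second, "in addition" statement, fix a root of unity $\zeta$ with $f(\zeta)\ne 0$ and let $\ell = \ord(\zeta)$. Then $g_n(\zeta) = 0$ is equivalent to $\zeta^n = -\eps f^*(\zeta)/f(\zeta)$, and the right-hand side is a constant not depending on $n$; call it $\lambda$. If there is no $n$ at all with $\zeta^n = \lambda$ there is nothing to prove (the relevant progression is empty — but in fact we are given that some $g_n$ vanishes at $\zeta$, so $\lambda$ is a power of $\zeta$). Otherwise pick the least nonnegative such exponent $r$, so $0 \le r < \ell$ because $\zeta^\ell = 1$; then for any other $n$, $\zeta^n = \lambda = \zeta^r$ holds if and only if $\zeta^{\,n-r} = 1$, i.e. if and only if $\ell \mid n - r$, i.e. $n \in \{\ell k + r : k = 0, 1, 2, \dots\}$ (one should note $n-r \ge 0$ is forced since $n \ge r$ by minimality of $r$ among valid exponents, or simply restrict to $n \in \N$ and adjust $r$ into $[0,\ell)$). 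This is immediate once the first identity is in place, so no real obstacle here; the only point to state cleanly is that $r$ is well-defined and lies in $[0,\ell)$, which follows from the hypothesis that $\zeta$ is a zero of at least one $g_n$.
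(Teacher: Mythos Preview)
Your second part (the arithmetic-progression claim) is fine and matches the paper's argument: once $f(\zeta)\ne 0$, the equation $g_n(\zeta)=0$ reads $\zeta^n=-\eps f^*(\zeta)/f(\zeta)$, and the solution set in $n$ is a residue class modulo $\ell=\ord(\zeta)$.

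The first part, however, has a genuine gap. You write that ``$(\zeta^2)^n = f^*(\zeta^2)/f(\zeta^2)$ comes from the $g_n$-relation applied at $\zeta^2$,'' and similarly you replace $\zeta$ by $-\zeta$ or $-\zeta^2$. But applying the $g_n$-relation at $\zeta^2$ means asserting $g_n(\zeta^2)=0$, and nothing you have said forces this: all you know is $g_n(\zeta)=0$. Your parity split on $n$ does not supply the missing equation either; it only keeps track of the sign $(-1)^n$, it does not produce a second vanishing of $g_n$. Without a second equation independent of the unknown exponent $n$, the elimination you sketch cannot be carried out, and the five displayed polynomials do not appear.

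What is missing is exactly the Beukers--Smyth lemma you allude to by name but never actually invoke: for any root of unity $\zeta$, at least one of $\zeta^2$, $-\zeta^2$, $-\zeta$ is a Galois conjugate of $\zeta$ over $\Q$. Since $g_n\in\Z[x]$, the minimal polynomial of $\zeta$ divides $g_n$, and therefore $g_n$ also vanishes at that conjugate. This is how the paper gets the second equation. The three families in the statement then correspond to the three cases (conjugate equal to $\zeta^2$, $-\zeta^2$, or $-\zeta$), and the $\pm$ signs in the last two families record the parity of $n$ via $(-1)^n$; your instinct about parity was pointing at the right bookkeeping, but only after the conjugacy fact has been used. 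Once you insert this single sentence, the rest of your elimination (and your treatment of the case $f(\zeta)=0$) goes through as written.
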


\begin{proof}
As $\zeta$ is the root of unity, by Lemma 1  of \cite{beusmy} (or Lemma 2.1 of \cite{mcsm0}, at least one of the three numbers $\zeta^2$, $-\zeta^2$, $-\zeta$ must be an algebraic conjugate of $\zeta$ over $\Q$.
Multiplying $g_n(x)=x^n f(x)+\eps f^*(x)$ by $x^n f(x)-\eps f^*(x)$ we see that the polynomial $h(x)=x^{2n}f(x)^2 - f^*(x)^2$ has a zero at $x=\zeta$.

If $\zeta^2$ is conjugate of $\zeta$, then one also has $g_n(\zeta^2)=0$. Combining this with $h(\zeta)=0$ yields
\[
\left\{
\begin{split}
\zeta^{2n}f(\zeta)^2 	&-  f^*(\zeta)^2		&= 0,\\
\zeta^{2n}f(\zeta^2)	&+ \eps f^*(\zeta^2)	&= 0.
\end{split}
\right.
\]
Hence,
\[
\begin{vmatrix}
f(\zeta)^2 	 &- f^*(\zeta)^2\\
f(\zeta^2) &\eps f^*(\zeta^2)\\
\end{vmatrix} = \eps f(\zeta)^2f^*(\zeta^2)  + f(\zeta^2)f^*(\zeta)^2 = 0.
\]
Thus, $\zeta$ is the root of $f(x^2)f^*(x)^2 + \eps f(x)^2f^*(x^2)$.

Suppose next that $-\zeta^2$ is a conjugate to $\zeta$. Then, using $g_n(-\zeta^2)=0$ and $h(\zeta)=0$, one concludes that $\zeta$ is the root of the polynomial
 $f(x)^2f^*(-x^2) +\eps (-1)^n f(-x^2)f^*(x)^2$. 

In the case when $-\zeta$ is conjugate to $\zeta$, from $g_n(\zeta)=g_n(-\zeta)=0$ one obtains $\zeta^n f(\zeta)+\eps f^*(\zeta)=0$ and $(-\zeta)^n f(-\zeta)+\eps f^*(-\zeta)=0$, which yields that $\zeta$ is a root of $f(x)f^*(-x) +(-1)^{n+1} f(-x)f^*(x)$.
 
Finally, if a root of unity $\zeta$ of order $\ell$ satisfies $g_n(\zeta)=0$, then $g_{n+\ell}(\zeta)=0$. Furthermore, if $\zeta$ is a common root of $x^{n_1}f(x) +\eps f^*(x)$ and $x^{n_2}f(x)+ \eps f^*(x)$, then $(\zeta^{n_2}-\zeta^{n_1})f(\zeta)=(\zeta^{n_2-n_1}-1)\zeta^{n_1}f(\zeta)=0$. By $f(\zeta) \ne 0$, it follows that $\zeta^{n_2 - n_1}=1$. Thus, $\ell \mid (n_2 - n_1)$ and so all such $n$ form an arithmetic progression with difference $\ell$, as claimed. 
\end{proof}

\section{Proofs of the theorems}

\begin{proof}[Proof of Theorem~\ref{pirmoji}]
Assume that $k_{2i} \ne k_{2i-1}$ for some $i$ in the range $1 \le i \le s$. Let $G$ be the Galois group of the normal extension of $\Q(\al,\ga)$ over $\Q$, and let $\sigma$ be an automorphism of $G$ which maps $\al_{2i-1}$ to $\al_1=\al$.  Then, $\sigma(\al_{2i})=\sigma(1/\al_{2i-1})=1/\al$, so that \eqref{duu} maps into
\begin{equation}\label{kjh}
\sigma(\gamma)=k_{2i-1}\al+k_{2i}/\al+ t_3\al_3+\dots+t_d\al_d,
\end{equation}
where $t_3,\dots,t_d \in \Q$ is a permutation of the list obtained from the initial list 
$k_1,\dots,k_d$ by excluding the elements  $k_{2i-1}$ and $k_{2i}$. 

Consider the following equality which is complex conjugate to \eqref{kjh}:
\begin{equation}\label{kjh1}
\overline{\sigma(\gamma)}=k_{2i-1}\al+k_{2i}/\al+ t_3\overline{\al_3}+\dots+t_d\overline{\al_d}.
\end{equation}
Since $\overline{\sigma(\ga)}=\sigma(\ga)$ and $\al_{2j}=\overline{\al_{2j-1}}$ for $j=2,\dots,s$, by adding \eqref{kjh} and \eqref{kjh1}, we obtain
$$
2\sigma(\gamma) = 2k_{2i-1}\al+2k_{2i}/\al+ w_2(\al_3+\al_4)+
\dots+w_{s}(\al_{d-1}+\al_d),
$$
where $w_{j}=t_{2j-1}+t_{2j}$ for $j=2,3,\dots,s$. 
Adding $2(k_{2i}-k_{2i-1})\al$ to both sides we deduce that
$$2\sigma(\gamma)+ 2(k_{2i}-k_{2i-1})\al=w_1(\al_1+\al_2)+w_2(\al_3+\al_4)+\dots+w_{s}(\al_{d-1}+\al_d),$$
where $w_1=2k_{2i}$.

As we already observed above, the number $\be_1=\be=\al+1/\al=\al_1+\al_2$ is totally real with conjugates $\be_2=\al_3+\al_4$, \dots, $\be_s=\al_{d-1}+\al_d$. Hence,
the number 
$$2(k_{2i}-k_{2i-1})\al=w_1\be_1+w_2\be_2+\dots+w_{s}\be_s-2\sigma(\gamma)$$
is a linear form (with rational coefficients $w_1,\dots,w_s,-2$) in totally real algebraic numbers $\be_1,\dots,\be_s,\sigma(\gamma)$.
Thus, it must be totally real itself. However, the number $2(k_{2i}-k_{2i-1})\al \ne 0$ is
not totally real, since it has a non-real conjugate $2(k_{2i}-k_{2i-1})\al_3$. 
This is a contradiction which completes the proof of the theorem. 
\end{proof}

\begin{proof}[Proof of Theorem \ref{keturi-1}] Assume that there exists a smallest even degree $d$ (where $d \geq 8$ by Corollary~\ref{keturi-1}), such that there are no Salem numbers of that degree $d$ with trace $0$. We will track down and ultimately eliminate all such possible $d$ by considering 3 sequences of polynomials, given explicitly by Salem's original construction \cite{salem1, salem2}.

We start with a Salem sequence 
\[
g_n(x)=x^n(x^3-x-1) + (-x^3-x^2+1), \quad n \geq 2.
\] Then $g_n(x)$ either posseses cyclotomic factors or it is a minimal polynomial of a Salem number of trace $0$; see \cite{boyd, salem1, salem2}. Since we have assumed that no Salem number of degree $d$ and trace $0$ exists, the polynomial $g_n(x)$ of degree $d=\deg{g_n}=n+3$ must be reducible, that is, it must be divisible by a cyclotomic polynomial $\Phi_{\ell}(x)$, where $\ell \in\N$.

To find cyclotomic factors of $g_n(x)$, we apply
Lemma~\ref{Lemma_cyclotomic_factors} with $f(x)=x^3-x-1$ and $\eps=1$. The following candidates appear as factors of auxiliary polynomials described in
Lemma~\ref{Lemma_cyclotomic_factors} (with $\eps=1$):
\[
\Phi_1(x) = x-1, \qquad \Phi_2(x)=x+1, \qquad \Phi_8(x)=x^4+1\]
\[
\Phi_{12}(x)=x^4-x^2+1, \qquad \Phi_{18}(x)=x^6-x^3+1,
\]
\[
\Phi_{30}(x) = x^8 + x^7 - x^5 - x^4 - x^3 + x + 1.
\]  Since none of the five auxiliary polynomials is zero identically, this list is complete. 

To see which of these candidates actually show up, one can apply the periodicity property stated in the second part of Lemma~\ref{Lemma_cyclotomic_factors}. After computation of ${\rm gcd}(g_n(x), \Phi_{\ell}(x))$, $0 \leq n \leq \ell-1$ for $\ell=1, 2, 8, 12, 18, 30$ it turns out that $g_n(x)$ has cyclotomic factors precisely for the degrees $d=n+3$ in one of the arithmetic progressions:
\[
d \in \{2k+1, 8k+2, 12k+1, 18k+17, 30k+24\},
\]
where $k=0, 1, 2, \dots $. As $d$ must be even, we restrict all such possible $d$ to two arithmetic progressions: $d \in \{8k+2, 30k+24\}$.

Next, we take the second sequence
\[
h_n(x) = \frac{x^n(x^2-x-1) - (-x^2-x+1)}{x-1},  \quad n \geq 2.
\]
Although now $f(x)=x^2-x-1$ contributes the coefficient $-1$ of $x^{n+1}$ to $g_n(x)$, one regains trace $0$ after division by $x-1$. 
Let us apply 
Lemma~\ref{Lemma_cyclotomic_factors} to the polynomial $g_n(x)=(x-1)h_n(x)$ with this new choice of $f(x)$ and $\eps=-1$. The candidate cyclotomic factors are:
\[
\Phi_1(x) = x - 1, \qquad \Phi_2(x) = x + 1,\qquad \Phi_3(x)= x^2 + x + 1,
\]
\[
\Phi_6(x)= x^2 - x + 1,\qquad \Phi_{12}(x)=x^4 - x^2 + 1.
\] As above, the computation of gcd's with first $12$ polynomials of the sequence yields the list of possible bad degrees $d=n+1$:
\[
d \in \{ 2k+1, 3k+ 2, 6k+3, 12k +4\}.
\]
This list also accounts for the single occurrence of the multiple factors, namely, $(x-1)^2$ in $g_4(x)$. Bad degrees must be even, so we are left with $d \in \{6k+2, 12k+4\}$. 

Let us combine this with the arithmetic progressions obtained from the first sequence:
\[
d \in \{8k+2, 30k+24\} \cap \{6k+2, 12k+4\}.
\]
Notice that all integers $30k+24$ are divisible by $6$, while none of $6k+2$ or $12k+4$ are. Therefore, $d \notin \{30k+24\}$, and hence $d \in \{8k+2\}$. Next, notice that $12k+4$ is divisible by $4$ while $8k+2$ is not. Consequently, $d \notin \{12k+4\}$. It follows that
\[
d \in \{8k+2\} \cap \{6k+2\} = \{24k + 2\}.
\]

To eliminate this possibility,  let us consider the third sequence, constructed with $f(x)=x^3-x^2-1$ and $\eps=-1$:
\[
h_n(x) = \frac{x^n(x^3-x^2-1) - (-x^3-x+1)}{x-1}, \quad n \geq 2.
\]
This time, by Lemma~\ref{Lemma_cyclotomic_factors} the candidates for cyclotomic divisors are
\[
\Phi_1(x)= x - 1, \quad
 \Phi_2(x)= x + 1, \quad
 \Phi_3(x)= x^2 + x + 1, \quad
 \Phi_4(x)= x^2 + 1, \quad
 \]
 \[
 \Phi_6(x)= x^2 - x + 1, \quad
 \Phi_{10}(x)= x^4 - x^3 + x^2 - x + 1, \quad
 \Phi_{18}(x)= x^6 - x^3 + 1.
\]
Now, bad degrees $d=n+2$ for this sequence $h_n(x)$ are
\[
d \in \{2k+1, 3k+1, 4k+3, 6k+4, 10k+5, 18k+ 6\}.
\] This last list accounts for the factor $(x-1)^2$ of $g_5(x)$ for a single value $n=5$. Since $d$ is even, $d \notin \{2k+1, 4k+3, 10k+5\}$. Since $d \in \{24k+2\}$ would have remainder $2 \pmod{3}$, we have $d \notin \{3k+1, 6k+4\}$. Finally, $d \notin \{18k+6\}$, since $24k+2$ is not divisible by $6$. This exhausts the list of possibilities, so no such bad degrees can exist. Hence, for each even $d \geq 6$, we can find a Salem number of degree $d$ and trace $0$ in one of the three Salem sequences that were considered above.
\end{proof}

\begin{proof}[Proof of Theorem~\ref{antroji}]
Suppose that the relation \eqref{rysys} holds with some $k_j \in \Z$, not all zero, and conjugates $\al_j$ of a Salem number $\al$ labelled as in Theorem~\ref{pirmoji}.  Then, by Theorem~\ref{pirmoji}, we 
have $k_{2j}=k_{2j-1}$ for $j=1,\dots,s$. Setting $\beta_j=\al_{2j-1}+1/\al_{2j-1}$ for $j=1,\dots,s$ we find that
\eqref{rysys1} holds, namely,
$k_{1}\beta_1+k_3\beta_2+\dots+k_{2s-1}\beta_s=0$.

In order to prove the first part of the theorem we need to show that $|k_1|+|k_3|+\dots+|k_{2s-1}| \geq 3$. 
For a contradiction, assume that $$|k_1|+|k_3|+\dots+|k_{2s-1}| \leq 2.$$
The case when $|k_{2j-1}|=2$ for some $j$ (and so other $k_{2i-1}$ are all zeros) is clearly impossible, since 
$\pm 2 \be_j \ne 0$. Therefore, we must have $|k_{2i-1}|=|k_{2l-1}|=1$, where $i<l$, and 
$k_{2j-1}=0$ for each $j \ne i,l$. Dividing both sides of the relation $k_{2i-1}\be_{i}+k_{2l-1}\be_l=0$ by $k_{2i-1}$, we find that $\be_{i}=-k_{2l-1}\be_{l}/k_{2i-1}=\pm \be_{l}$. Since $\be_{i} \ne \be_l$, the only possibility is $\beta_{i}=-\be_{l}$. Applying to it any automorphism $\sigma$ that maps $\be_{i}$ to $\be_1>2$ one obtains $\be_1=-\sigma(\be_l)$. Here, the left hand side is a real number greater than $2$, whereas the right hand side belongs to the interval $(-2,2)$, which is a contradiction. 

In order to prove the existence of a Salem number of degree $12$ with 
required linear relation among its conjugates we can
take, for instance, the following two pairs of real numbers $(a,b)$: 
$$(a_1,b_1)=(5-\sqrt{2}, -3+2\sqrt{2}) \quad \text{and} \quad  (a_2,b_2)=(5+\sqrt{2},-3-2\sqrt{2}).$$ 

Here, the first pair $(a_1,b_1)$ satisfies $0<a_1<4$ and \eqref{pima}, since $b_1=-0.171572\dots$ and the left and right hand sides of \eqref{pima}
are
$-0.828427\dots$ and $0.828427\dots$, respectively.
Thus, by Lemma~\ref{trecioji}, $x^3-a_1x+b_1$ has three roots in $(-2,2)$. 

The second pair $(a_2,b_2)$ satisfies $3<a_2<12$ and
\eqref{pima2}, because $b_2=-5.828427\dots$ and the left and right hand sides of \eqref{pima2}
are
$-6.252637\dots$ and $-4.8284427\dots$, respectively.
Hence, by Lemma~\ref{trecioji}, $x^3-a_2x+b_2$ has two roots in $(-2,2)$ and one greater than $2$.  

Consequently, their product
\begin{align*}
g(x) :&=(x^3-a_1x+b_1)(x^3-a_2x+b_2) \\&= (x^3-5x-3+\sqrt{2}(x+2))(x^3-5x-3-\sqrt{2}(x+2)) \\&=
x^6-10x^4-6x^3+23x^2+22x+1
\end{align*}
has $5$ roots in $(-2,2)$ and one greater than $2$. Now,
$$f(x):=x^6g(x+1/x)$$
equals to
$$x^{12}-4x^{10}-6x^9-2x^8+4x^7+7x^6+4x^5-2x^4-
6x^3-4x^2+1.$$
This polynomial defines a Salem number $\al=2.502568\dots$, since $f$ is irreducible over $\Q$. 

We remark than none of the choices with $\sqrt{2}$ replaced by
$\sqrt{3}$ or $\sqrt{5}$ works.  The pairs $(a_1,b_1)=(5-\sqrt{3},-3+2\sqrt{3})$ and $(a_2,b_2)=(5+\sqrt{3},-3-2\sqrt{3})$ satisfy the requirements of Lemma~\ref{trecioji}. However, the polynomial $g$ (and so $f$) is reducible:
\begin{align*}
g(x):&= (x^3-5x-3+\sqrt{3}(x+2))(x^3-5x-3-\sqrt{3}(x+2)) \\&
=x^6-10x^4-6x^3+22x^2+18x-3 \\&
=(x^2-3)(x^4-7x^2-6x+1).
\end{align*}
Similarly, with the pairs $(a_1,b_1)=(5-\sqrt{5},-3+2\sqrt{5})$ and $(a_2,b_2)=(5+\sqrt{5},-3-2\sqrt{5})$ one also obtains $g$ with $5$ roots in $(-2,2)$ and one in $(2,+\infty)$, but $g$ (and so $f$) is reducible:
\begin{align*}
g(x):&= (x^3-5x-3+\sqrt{5}(x+2))(x^3-5x-3-\sqrt{5}(x+2)) \\&
=x^6-10x^4-6x^3+20x^2+10x-11 \\&
=(x^2+x-1)(x^4-x^3-8x^2+x+11).
\end{align*}

By Corollary~\ref{keturi}, there no Salem numbers of degree $4$ or $6$
with a nontrivial linear relation among its conjugates. 
To give the example of a Salem number of degree $8$ with nontrivial
linear relation among its conjugates 
we can take, for instance,
$h(x):=x^2+4x+1$ with roots $\ga_1=-2-\sqrt{3}$ and $\ga_2=-2+\sqrt{3}$ satisfying the conditions of 
Lemma~\ref{trecioji-1}.  
Then, 
\begin{align*}
f(x):&=x^4 h\big((x+1/x)(1-x-1/x)\big) \\&=
x^8-2x^7+x^6-2x^5+x^4-2x^3+x^2-2x+1
\end{align*}
is irreducible. Hence, by Lemma~\ref{trecioji-1}, $f$ defines a Salem number 
$\al=1.994004\dots$ of degree $8$ whose conjugates satisfy \eqref{aaa}
with $k=2$.

As above, not every choice of an irreducible $h$ produces the irreducible polynomial $f$.
For example, selecting
$h(x):=x^2+4x+2$ whose roots $\ga_1=-2-\sqrt{2}$ and $\ga_2=-2+\sqrt{2}$ satisfy the conditions of 
Lemma~\ref{trecioji-1}, we get
the polynomial
\begin{align*}
f(x):&=x^4 h\big((x+1/x)(1-x-1/x)\big) \\&=
x^8-2x^7+x^6-2x^5+2x^4-2x^3+x^2-2x+1 \\&=
(x^4+1)(x^4-2x^3+x^2-2x+1)
\end{align*}
which is reducible.
\end{proof}

{\bf Acknowledgements.} 
The reserach of the first named author was funded by the European Social Fund according to the activity ‘Improvement of researchers’ quali\-fication by implementing 
world-class R\&D projects’ of Measure  No. 09.3.3-LMT-K-712-01-0037. 
The post-doctoral position of the second named author is supported by the Austrian Science Fund (FWF) project M2259 Digit Systems, Spectra and Rational Tiles under the Lise Meitner Program.

\end{document}